\documentclass[12pt]{article}
\usepackage{amssymb,amsmath,amsfonts,amsthm,amsxtra,setspace,enumerate}
\usepackage[T1]{fontenc}
\usepackage{lmodern}
\usepackage{comment}
\usepackage{enumitem}

\usepackage[pagewise,displaymath,mathlines]{lineno}

\usepackage{titlesec}
\titlelabel{\thetitle.\ }

\newcommand{\noopsort}[1]{}
\newcommand{\st}[2]{\ensuremath{\{#1\, :\, #2\}}}
\newcommand{\seq}[3]{\ensuremath{\st{#1_{#2}}{#3}}}
\newcommand{\insect}{\cap}

\newcommand{\union}{\cup}

\newtheorem{prob}{Problem}

\newtheorem{theorem}{Theorem}[section]
\newtheorem{cor}[theorem]{Corollary}

\newsavebox{\Prfref}

\newsavebox{\prfref}




\newtheoremstyle{ref}
{\topsep}	
{\topsep}	
{\it}
{}
{}
{}
{ }
{\thmname{{\bfseries#1}}\thmnumber{ \textbf{#2\thmnote{\rm #3}\textbf .}}}

\theoremstyle{ref}
\newtheorem{lem}[theorem]{Lemma}
\newtheorem{thm}[theorem]{Theorem}

\newtheorem{defn}[theorem]{Definition}

\newtheoremstyle{nnref}
{\topsep}	
{\topsep}	
{}
{}
{}
{}
{ }
{\thmname{\textbf{#1}\thmnote{\textrm{ #3}}\textbf{.}}}

\theoremstyle{nnref}

\theoremstyle{definition}
\newtheorem*{rmk}{Remark}


\tolerance=200

\begin{document}

\title{The Strength of Menger's Conjecture}
\author{Franklin D. Tall{$^1$}, Stevo Todorcevic{$^2$}, Se{\c{c}}il Tokg\"oz{$^3$}}

\footnotetext[1]{Research supported by NSERC grant A-7354.\vspace*{2pt}}
\footnotetext[2]{Research supported by grants from CNRS and NSERC 455916.}
\footnotetext[3]{Research supported by T\"UB\.{I}TAK  grant 2219.\vspace*{2pt}}
\date{\today}
\maketitle

\begin{abstract}
\noindent Menger conjectured that subsets of $\mathbb R$ with the Menger property must be $\sigma$-compact. While this is false when there is no restriction on the subsets of $\mathbb R$,  for projective subsets it is known to follow from the Axiom of  Projective Determinacy, which has considerable large cardinal consistency strength. We note that 
in fact, Menger's conjecture for projective sets has consistency strength of only an inaccessible cardinal.

\end{abstract}

\renewcommand{\thefootnote}{}
\footnote
{\parbox[1.8em]{\linewidth}{  2020 MSC. Primary 03E15, 03E35, 03E60, 54A35, 54D20, 54H05; Secondary 03E45.}}\\
\renewcommand{\thefootnote}{}
\footnote
{\parbox[1.8em]{\linewidth}{Keywords and phrases: Menger, Hurewicz, $\sigma$-compact, co-analytic, projective set of reals, $L(\mathbb{R})$, Hurewicz Dichotomy.}}

\section{Introduction}
In $1924$, Menger \cite{Menger} introduced a topological property for metric spaces which he referred to as \textbf{ ``property E''}.  Hurewicz \cite{Hur27} reformulated property E as the following, nowadays called the \textbf{Menger} property:

\begin{defn} A space $X$ is \emph{Menger} if whenever $\{{\mathcal U}_{n}\}_{n\in\omega}$ is a sequence of open covers, there exist finite ${{\mathcal V}}_{n}\subseteq{\mathcal U}_{n}$, $n\in\omega$, such that $\bigcup_{n\in\omega} {\mathcal V}_{n}$ is a cover of $X$.
\end{defn}

The \textbf{Hurewicz} property is intermediate between Menger and $\sigma$-compact.

\begin{defn} 
A space $X$ is \emph{Hurewicz} if for any sequence $\{{\mathcal U}_n\}_{n \in\omega}$ of open covers of $X$ there are finite
sets $\mathcal V_n\subseteq \mathcal U_n$ such  that $\{\bigcup\mathcal V_n:n\in\omega\}$ is a  $\gamma$-cover of $X$, where an infinite open cover $\mathcal{U}$ is a {\it$\gamma$-cover} if for each $x\in X$ the set $\{U\in {\mathcal{U}}: x\not\in U\}$ is finite.
\end{defn}

\textup{An equivalent definition (for completely regular spaces) is that} 
\emph{a space $X$ is Hurewicz if and only if for each \v{C}ech-complete space $Z \supseteq X$, 
there is a $\sigma$-compact space $Y$ such that $X \subseteq Y \subseteq Z$} \cite{Tall2011}, \cite{BZ}.

There has recently been interest in the question of whether ``definable'' Menger spaces --- and, more specifically, Menger sets of reals ---  are $\sigma$-compact. See e.g., \cite{Tallnew,TTo,Tok}. Hurewicz \cite{Hur25} refuted  under the Continuum  Hypothesis Menger's conjecture \cite{Menger} that  Menger subsets of $\mathbb R$ are $\sigma$-compact. 
Just et al.~\cite{COC2} refuted Hurewicz's conjecture that Hurewicz sets of reals are $\sigma$-compact, and hence also refuted Menger's conjecture in ZFC. A ZFC counterexample to Menger's conjecture was earlier produced by Chaber and Pol \cite{CP} in an unpublished note.  More natural examples were produced by Bartoszy\' nski and Shelah \cite{BaSh01}, and later Tsaban and Zdomskyy \cite{TsZ}.  A convenient source for examples differentiating these three properties is the survey paper \cite{Ts}. 

Hurewicz \cite{Hur25} proved that analytic Menger subsets of  $\mathbb R$ are $\sigma$-compact; this was later extended to arbitrary  Menger analytic spaces by Arhangel'ski{\u\i} \cite{A}. Hurewicz  \cite{Hur25} also proved this for completely metrizable spaces; this was extended to \v{C}ech-complete spaces in \cite{TTo}. That determinacy hypotheses suffice to imply more complicated ``definable'' Menger sets of reals (e.g.~co-analytic ones) are $\sigma$-compact was first noticed in \cite{MF} and stated explicitly in \cite{Tall2011}. See also \cite{TTs} and \cite{BT}.

Determinacy hypotheses have considerable large cardinal strength, so it is of interest to compute the exact consistency strength of such propositions as e.g.~``every co-analytic (projective) Menger set of reals is $\sigma$-compact''.  

We shall consider three primary families of ``definable'' sets of reals: the co-analytic sets, the projective sets, and those sets of reals which are members of $L(\mathbb{R})$. The co-analytic sets are just the complements of analytic sets; the projective sets are obtained by closing the Borel sets under complementation and continuous real-valued image. They are arranged in a hierarchy -- see Kechris \cite{Kech} for notation and properties.  The co-analytic sets are also called the $\Pi^1_1$-sets.  
$L(\mathbb{R})$ is the \textit{constructible closure} of $\mathbb{R}$. It is the smallest inner model of ZF with $\mathbb{R}$ as a member.  See e.g.~Kanamori \cite{Kan94} or Moschovakis \cite{Mosch} for its properties.  It is frequently studied in its own right; for us, $\mathcal{P}(\mathbb{R}) \cap L(\mathbb{R})$ -- those sets of reals that are in $L(\mathbb{R})$ -- is a convenient large family of definable sets of reals that includes the projective sets and much more. For readers unfamiliar with definability, we point out that the process of constructing a Borel set, projective set, etc.~can be encoded as a sequence of (sequences of \dots) operations on rational intervals, and hence as a real number. 
Our main result is:

\begin{thm}\label{bil}
The following are equiconsistent: 
\begin{enumerate}[label=\alph*)]
\item there is an inaccessible cardinal, 
\item every Menger co-analytic set of reals is $\sigma$-compact, 
\item every Menger projective set of reals is $\sigma$-compact, 
\item every Menger set of reals in $L(\mathbb{R})$ is $\sigma$-compact.
\end{enumerate}
\end{thm}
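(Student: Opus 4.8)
The plan is to prove equiconsistency by establishing a cycle of implications among the four statements' consistency, anchored by the well-understood connection between inaccessibility and the regularity properties of projective (and $L(\mathbb R)$) sets of reals. The key conceptual link is that Menger-implies-$\sigma$-compact for a definable class is, in spirit, a regularity property of the Hurewicz-dichotomy type, and such dichotomies for a pointclass are known to be calibrated by inaccessible cardinals once one has enough generic absoluteness.

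First I would establish the ``easy'' direction, namely $\mathrm{Con}(\text{inaccessible}) \Rightarrow \mathrm{Con}(d)$, which by the trivial inclusions $\Pi^1_1 \subseteq \text{projective} \subseteq \mathcal P(\mathbb R)\cap L(\mathbb R)$ subsumes the consistency of $(b)$ and $(c)$ as well. The natural move is to pass to the Levy collapse $L[G]$ of an inaccessible $\kappa$ to $\omega_1$, or better to work inside $L(\mathbb R)$ of a model where every set of reals in $L(\mathbb R)$ has the relevant regularity property. Concretely, I would argue that after collapsing an inaccessible, every set of reals in $L(\mathbb R)$ satisfies the \emph{Hurewicz Dichotomy}: either $X$ is contained in a $\sigma$-compact subset of its \v Cech-complete superspace, or $X$ contains a closed-in-$X$ copy of the irrationals (equivalently a relatively closed homeomorph of $\omega^\omega$). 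The second alternative is incompatible with the Menger property, since $\omega^\omega$ is not Menger and the Menger property is inherited by closed subspaces; hence a Menger $X$ falls into the first alternative and, invoking the characterization of Hurewicz quoted in the excerpt together with the fact that Menger plus Hurewicz-witnessing-$\sigma$-compactness forces $\sigma$-compactness, one concludes $X$ is $\sigma$-compact. The heart of this direction is thus to show that an inaccessible suffices to make the Hurewicz Dichotomy hold for all sets of reals in $L(\mathbb R)$, which I would extract from the tree-representation/absoluteness technology for $L(\mathbb R)$ (unfoldings and the usual collapse argument), rather than from full determinacy.

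For the reverse direction, $\mathrm{Con}(b) \Rightarrow \mathrm{Con}(\text{inaccessible})$, I would show that the failure of ``every Menger co-analytic set is $\sigma$-compact'' is cheap to arrange whenever $\omega_1$ is \emph{not} inaccessible in $L$, i.e.\ whenever $\omega_1 = \omega_1^{L[r]}$ for some real $r$. In that case there is a $\Pi^1_1$ (in $r$) set coding a non-$\sigma$-compact but Menger set of reals: the standard device is the $\Pi^1_1$ set of branches of an $r$-definable tree whose sections give a Menger non-$\sigma$-compact example; concretely, from a $\Pi^1_1$ set of size $\aleph_1$ with no perfect subset one manufactures a co-analytic Menger non-$\sigma$-compact set by the Hurewicz-style or Bartoszy\'nski--Shelah-style construction, relativized to $L[r]$. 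Contrapositively, $(b)$ forces $\omega_1^{L[r]} < \omega_1$ for every real $r$, which is exactly the statement that $\omega_1$ is inaccessible to reals, and this is well known to yield an inaccessible cardinal in an inner model; hence $\mathrm{Con}(b)$ gives $\mathrm{Con}(\text{inaccessible})$. Since $(b)$ is the weakest of $(b),(c),(d)$ (it follows from each), establishing this single reverse implication closes the cycle.

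The main obstacle I anticipate is the first direction's reliance on getting the Hurewicz Dichotomy for the whole pointclass $\mathcal P(\mathbb R)\cap L(\mathbb R)$ from merely an inaccessible, since the obvious route is through determinacy, which is far too strong. The delicate point is to replace determinacy by the weaker generic-absoluteness consequences of an inaccessible (via the Levy collapse and Shoenfield-type or tree-representation absoluteness for $\Sigma^1_2$ and, in $L(\mathbb R)$, for the relevant unfolded classes), and to verify that the dichotomy's ``second alternative'' really is a \emph{closed} copy of $\omega^\omega$ inside a \v Cech-complete ambient space so that the Menger property genuinely excludes it. I would budget most of the technical effort here, citing the Hurewicz Dichotomy results and the absoluteness machinery, and I expect the clean statement to be that an inaccessible is exactly the hypothesis needed to run the collapse argument while the $\Pi^1_1$ counterexample in $L[r]$ supplies the tight lower bound.
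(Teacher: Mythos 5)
Your high-level architecture coincides with the paper's: consistency of (a) gives consistency of (d) via the Solovay model and a Hurewicz-dichotomy statement for all sets of reals in $L(\mathbb{R})$ (the paper cites Di Prisco--Todorcevic for exactly this), (d) $\Rightarrow$ (c) $\Rightarrow$ (b) are the trivial pointclass inclusions, and (b) is pushed back to an inaccessible through $\omega_1^{L[r]}$. But both of your key steps have genuine gaps. In the forward direction, the dichotomy you propose to prove --- ``$X$ is contained in a $\sigma$-compact subset of its \v{C}ech-complete superspace, or $X$ contains a relatively closed copy of $\omega^\omega$'' --- is vacuous for sets of reals: every $X \subseteq \mathbb{R}$ is contained in the $\sigma$-compact space $\mathbb{R}$, so the first alternative always holds and nothing follows. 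The statement you actually need (and the one the paper takes from Kechris--Louveau--Woodin and, for $L(\mathbb{R})$ in the Solovay model, from Di Prisco--Todorcevic) has ``$X$ is $\sigma$-compact'' as its first alternative, the second alternative supplying a Cantor set $K$ such that $K \cap X$ is a relatively closed copy of $\mathbb{P}$ inside $X$; the Menger property then excludes the second alternative and directly yields $\sigma$-compactness. Your attempted bridge from ``contained in a $\sigma$-compact set'' to ``$\sigma$-compact'' --- that ``Menger plus Hurewicz-witnessing-$\sigma$-compactness forces $\sigma$-compactness'' --- is false: it is precisely Hurewicz's conjecture, refuted in ZFC by Just, Miller, Scheepers and Szeptycki, and indeed the paper's own Theorem 3.1 produces, under your hypothesis $\omega_1^{L[r]}=\omega_1$, a co-analytic Hurewicz non-$\sigma$-compact set. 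So as written this direction does not go through; it is repaired only by quoting the correct, stronger form of the dichotomy.

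The reverse direction is where the paper's real work lies, and your sketch passes over it. You claim that from $\omega_1^{L[r]}=\omega_1$ one can ``cheaply'' extract a co-analytic Menger non-$\sigma$-compact set from a thin $\Pi^1_1$ set of size $\aleph_1$ by relativizing the Hurewicz or Bartoszy\'nski--Shelah construction. A thin uncountable $\Pi^1_1$ set is indeed automatically non-$\sigma$-compact (an uncountable $\sigma$-compact set is Borel and contains a perfect set), but nothing whatsoever makes it Menger, and that is the entire difficulty. The natural definable witness is the co-analytic $\omega_1$-scale $A$ in $(\omega^\omega \cap L[r], \leq^*)$ (the paper's Lemma 3.2). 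If $\omega^\omega \cap L[r]$ is unbounded in $(\omega^\omega, \leq^*)$, then $A$ is a $\mathfrak{b}$-scale and Tsaban's theorem gives that $A \cup \omega^{<\infty}$ is Hurewicz and non-$\sigma$-compact, as you want. But $\omega^\omega \cap L[r]$ can perfectly well be bounded while $\omega_1^{L[r]}=\omega_1$ (for instance after a length-$\omega_1$ finite-support Hechler iteration over $L$), and then $A$ is bounded, Tsaban's theorem does not apply, and there is no reason for $A$, or any countable enlargement of it, to be Menger. The paper's proof of its Theorem 3.1 confronts exactly this case by a bootstrap: if $A$ is not Hurewicz, Hurewicz's theorem yields a continuous $f \colon A \to \omega^\omega$ with unbounded range; $f$ extends to a Borel map $g$ on $\omega^\omega$; coding $r$ and $g$ into a single real $b$, one concludes that $\omega^\omega \cap L[b] \supseteq f[A]$ is unbounded, and re-applying the scale lemma to $b$ produces a genuine co-analytic $\mathfrak{b}$-scale $B$, so that $B \cup \omega^{<\infty}$ is Hurewicz (Tsaban) and non-$\sigma$-compact (it is not even Borel, by the countable-rank theorem for Borel well-founded relations). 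This second pass through the larger inner model $L[b]$ is the missing idea in your proposal; without it, ``relativize the standard construction to $L[r]$'' does not produce a set that is simultaneously $\Pi^1_1$ and Menger.
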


As is common in descriptive set theory, we will use $\mathbb{R}$ or the Cantor set as convenient, since e.g. there is a  co-analytic Menger non-$\sigma$-compact subset of  
$\mathbb{R}$ if and only if  there is one included in the Cantor set.

\section{The Hurewicz Dichotomy}
 
  A classical phenomenon,  the \emph{Hurewicz Dichotomy}, was first investigated by Hurewicz \cite{Hur28} and later extended by Kechris, Louveau and Woodin \cite{KLW}. See e.g.~Section $21.F$ of  \cite{Kech}. Here is one version of the Hurewicz Dichotomy.
\\

\textbf{Hurewicz Dichotomy (HD).}  Let  $X$ be a Polish (separable completely metrizable) space and $A\subseteq X$ an  analytic set.
If $A$ is not $\sigma$-compact, then there is a Cantor set $K\subseteq  X$ such that $K\cap A$ is dense in $K$ and homeomorphic to  $\mathbb{P}$, the space of irrationals, and $K\setminus A$ is countable dense in $K$ and homeomorphic to $\mathbb{Q}$, the space of rationals.  

\begin{defn} Let $\Gamma$ be a subset of the power set of $\mathbb{R}$. $\mathrm{\mathbf{HD(\Gamma)}}$ is the assertion obtained from $\mathbf{HD}$ by replacing ``analytic''  by `` a member of $\Gamma$'' .
\end{defn}

\begin{thm}[~\cite{Hur28}] If  $\Gamma$ is a collection of subsets of $\mathbb{R}$ satisfying $\mathrm{\mathbf{HD(\Gamma)}}$ as above, then every Menger member of $\Gamma$ is $\sigma$-compact.

\end{thm}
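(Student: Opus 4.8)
The plan is to argue by contraposition: I would show that every member of $\Gamma$ that fails to be $\sigma$-compact also fails to be Menger, which is logically equivalent to the stated implication. So suppose $A \in \Gamma$ is not $\sigma$-compact, and regard $A$ as a subset of a Polish space $X$ (e.g.\ $\mathbb{R}$, or the Cantor set). Applying $\mathbf{HD(\Gamma)}$ to $A \subseteq X$ produces a Cantor set $K \subseteq X$ such that $K \cap A$ is dense in $K$ and homeomorphic to $\mathbb{P}$, the space of irrationals. Only this part of the dichotomy is needed; the description of $K \setminus A$ as a copy of $\mathbb{Q}$ will play no role.

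The first observation is that $K \cap A$ is a \emph{closed} subspace of $A$: since $K$ is compact, it is closed in $X$, so $K \cap A$ is relatively closed in $A$. Next I would invoke the standard fact that the Menger property is inherited by closed subspaces. Concretely, given a closed $F \subseteq Y$ and open covers $\{\mathcal U_n\}_{n \in \omega}$ of $F$ in the subspace topology, one extends each member to an open subset of $Y$, adjoins the open set $Y \setminus F$ to obtain open covers of $Y$, applies the Menger selection in $Y$, and restricts the chosen finite subfamilies back to $F$. Hence, if $A$ were Menger, then the closed subspace $K \cap A$ would be Menger as well.

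The remaining ingredient is that $\mathbb{P} \cong \omega^\omega$ is not Menger, the prototypical witness to non-Mengerness. This has a one-line combinatorial proof: for each $n$ put $\mathcal U_n = \{\{f : f(n) \le k\} : k \in \omega\}$, an open cover of $\omega^\omega$; any finite $\mathcal V_n \subseteq \mathcal U_n$ is bounded on the $n$th coordinate by some $g(n)$, and the point $f$ defined by $f(n) = g(n)+1$ escapes $\bigcup_{n \in \omega} \mathcal V_n$. Since $K \cap A$ is homeomorphic to $\mathbb{P}$, it is not Menger, contradicting the previous paragraph. Therefore $A$ is not Menger, which completes the contrapositive. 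I do not anticipate a genuine obstacle: all the content is front-loaded into the dichotomy hypothesis, and the only points deserving care are verifying that $K \cap A$ is relatively closed in $A$ (so that closed-hereditariness applies) and that the homeomorphism type $\mathbb{P}$ is exactly what precludes the Menger property.
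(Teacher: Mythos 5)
Your proposal is correct and follows essentially the same route as the paper's proof: apply $\mathbf{HD(\Gamma)}$ to a non-$\sigma$-compact $A \in \Gamma$, note that $K \cap A$ is a closed subspace of $A$ homeomorphic to $\mathbb{P}$, and conclude from closed-hereditariness of Menger and the non-Mengerness of $\mathbb{P}$ that $A$ is not Menger. The only difference is that you spell out the two standard facts (closed-hereditariness and the diagonalization showing $\omega^\omega$ is not Menger) which the paper simply cites.
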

\begin{proof} Let $A$ be a member of $\Gamma$. Suppose $A$ is not  $\sigma$-compact.  By $\mathbf{HD(\Gamma)}$,  there is a Cantor set $K$ such that $K\subseteq \mathbb{R}$  and  $K\cap A$ is homeomorphic to $\mathbb{P}$ . But $K\cap A$ is a closed subset of $A$ and  $\mathbb{P}$ is not Menger \cite{Hur27}; since that property is closed-hereditary,  $A$ cannot be Menger.
\end{proof}


\begin{rmk}
The proof that an inaccessible suffices to prove the consistency with ZFC of $\mathbf{HD(L(\mathbb{R}))}$ (of course we mean $\mathcal{P}(\mathbb{R}) \cap L(\mathbb{R})$) and hence that Menger projective sets are $\sigma$-compact can actually be found in Di Prisco--Todorcevic \cite{DiPT}.
This may not be obvious to the casual reader, since the authors of \cite{DiPT} are interested in $L(\mathbb{R})$ and other models not satisfying the Axiom of Choice. However the results about $L(\mathbb{R})$ satisfying various principles such as the Hurewicz Dichotomy for all sets of reals can be interpreted as ZFC results about sets of reals that happen to be in $L(\mathbb{R})$.\\
\end{rmk}

\begin{rmk}
Solovay \cite{Sol} was the first to realize the usefulness of the model $L(\mathbb{R})$ as computed in the forcing extension  obtained by collapsing an inaccessible cardinal to $\omega_1$ via finite conditions to problems of descriptive set theory such as for example the problem of Lebesgue measurability of projective sets of reals. Feng's paper \cite{Fe} contains various interesting results about this {\bf Solovay model}  $L(\mathbb{R})$ , e.g. ~ extensions of the fact that uncountable sets of reals which are in $L(\mathbb{R})$ must include a perfect set. Solovay models are further explored in Di Prisco--Todorcevic \cite{DiPT} and Todorcevic \cite{T}.  Section 4 plus point \#6 on p.~249 of \cite{DiPT} prove that $\mathbf{HD(L(\mathbb{R}))}$ holds in such models.

\end{rmk}


\section{The inaccessible is necessary}

To prove Theorem \ref{bil}, by the above Remarks  it  more than suffices to show:

\begin{thm}\label{mengerinacc}
If  $\omega_{1}^{L[a]}=\omega_1$ for some $a \in \mathbb{R}$, then there is a co-analytic set of reals which is  Hurewicz but not $\sigma$-compact.
\end{thm}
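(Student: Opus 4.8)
The plan is to refute clause~(b): under the hypothesis I will produce a co-analytic set that is even Hurewicz and not $\sigma$-compact, which suffices since Hurewicz implies Menger. I work in the Cantor set $2^\omega=\mathcal P(\omega)$, write $\mathbb Q$ for the countable dense set $[\omega]^{<\omega}$ of finite sets, and identify each infinite set with its increasing enumeration, so that $[\omega]^\omega$ is a copy of $\omega^\omega$. The target is a set $X=B\cup\mathbb Q$ with $B\subseteq[\omega]^\omega$ a $\leq^*$-increasing sequence $\langle b_\alpha:\alpha<\omega_1\rangle$. Following the constructions of \cite{BaSh01,TsZ}, if $B$ is moreover $\leq^*$-unbounded then $X$ is Hurewicz but not $\sigma$-compact: non-$\sigma$-compactness comes from $B$ being uncountable with no perfect subset, while the Hurewicz property holds because the dense set $\mathbb Q$ blocks the enumeration map and forces every continuous image of $X$ in $\omega^\omega$ to be $\leq^*$-bounded.

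To make $X$ co-analytic I exploit the effective machinery available from $\omega_1=\omega_1^{L[a]}$. Fix a real $a$ witnessing the hypothesis. Then $L[a]\models\mathsf{CH}$, the canonical wellordering $<_{L[a]}$ is $\Sigma^1_2(a)$-good, and the largest thin $\Pi^1_1(a)$ set $C_a=\{x:x\in L_{\omega_1^{x,a}}[a]\}$ is uncountable (this is exactly where $\omega_1^{L[a]}=\omega_1$ is used) and dominating in $(\omega^\omega\cap L[a],\leq^*)$. Inside $L[a]$, where only $\aleph_1$ reals and hence only $\aleph_1$ codes for $\sigma$-compact sets occur, I run an Erd\H{o}s--Kunen--Mauldin/Miller style recursion of length $\omega_1$: at stage $\alpha$ I take $b_\alpha$ to be the $<_{L[a]}$-least element of $C_a$ whose enumeration dominates all earlier $b_\beta$ and the $\alpha$-th real of $L[a]$, so that $B$ becomes a $\leq^*$-increasing sequence cofinal in the reals of $L[a]$. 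Choosing $<_{L[a]}$-least witnesses from $C_a$ is what makes the construction definable: ``$x\in B$'' is decided inside the countable level $L_{\omega_1^{x,a}}[a]$ and, together with ``$x\in C_a$'', can be cast as a $\Pi^1_1(a)$ condition by Shoenfield absoluteness. Hence $B$, and so $X=B\cup\mathbb Q$, is co-analytic.

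It then remains to check the two topological properties in $V$. That $X$ is not $\sigma$-compact is immediate: $B$ is an uncountable thin $\Pi^1_1(a)$ set, so $X$ has no perfect subset, whereas an uncountable $\sigma$-compact set would contain one; and thinness of $C_a$ (hence of $B$) transfers to $V$ by the usual bounding argument on $\omega_1^{x,a}$. For the Hurewicz property I would run the covering argument of \cite{BaSh01,TsZ} for $B\cup\mathbb Q$, which needs only that $B$ be $\leq^*$-increasing and that it escape every $\sigma$-compact subset of $2^\omega\setminus X$.

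The main obstacle is precisely this last point in $V$. The recursion lives in $L[a]$, so what it directly delivers is a sequence cofinal in, and hence unbounded over, the reals of $L[a]$; a $\sigma$-compact set of $V$ disjoint from $X$ need not be coded in $L[a]$. The crux is therefore an absoluteness/reflection step: given a $\sigma$-compact $K\subseteq 2^\omega\setminus X$ in $V$, one must produce a $\sigma$-compact set coded in $L[a]$ that still captures $K$ and is disjoint from $B$, so that the $L[a]$-scale escapes it and the required $F_\sigma$ sandwich exists. Since $K$ being $\sigma$-compact and disjoint from the $\Pi^1_1(a)$ set $X$ is a $\Sigma^1_2(a)$-type fact, Shoenfield absoluteness between $V$ and $L[a]$ is what should drive this reflection; making it precise --- and thereby transferring the Hurewicz property, rather than merely the non-$\sigma$-compactness, from $L[a]$ to $V$ --- is the step I expect to require the most care.
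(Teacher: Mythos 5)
Your opening steps are fine and in fact reproduce the paper's Lemma \ref{key}: under $\omega_1^{L[a]}=\omega_1$, the recursion through the thin $\Pi^1_1(a)$ set along $<_{L[a]}$ is essentially the standard proof (Kanamori, p.~171) that $\omega^\omega\cap L[a]$ carries a co-analytic $\omega_1$-scale, and your non-$\sigma$-compactness argument via the perfect set property is correct (the paper instead notes the scale is not even Borel, since Borel well-founded relations have countable rank). The gap is exactly where you flagged it, but it is not merely ``a step requiring care'': no Shoenfield-style reflection can close it, because the object your plan needs may simply fail to exist. Your Hurewicz argument (via \cite{BaSh01}, \cite{TsZ}, \cite[Theorem 3.3]{Ts}) needs $B$ to be a $\mathfrak{b}$-scale, i.e.\ $\leq^*$-unbounded \emph{in $V$}; but an $\omega_1$-sequence can be unbounded only if $\mathfrak{b}=\aleph_1$, and the hypothesis of the theorem is consistent with $\mathfrak{b}>\aleph_1$. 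For instance, force $\mathrm{MA}+2^{\aleph_0}=\aleph_2$ over $L$ by a ccc iteration: then $\omega_1^{L}=\omega_1$, yet every set of reals of size $\aleph_1$ is $\leq^*$-bounded, so no choice of $a$, no recursion inside $L[a]$, and no absoluteness between $V$ and $L[a]$ can yield the unbounded co-analytic $\omega_1$-scale your covering argument requires. (Separately, reflecting an arbitrary $\sigma$-compact $K\subseteq 2^\omega\setminus X$ from $V$ into $L[a]$ is hopeless on its face: $K$ need not have any code in $L[a]$.)

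The paper's proof supplies precisely the missing idea: a dichotomy argued in $V$, rather than a single construction transferred from an inner model. Let $A$ be the co-analytic scale from Lemma \ref{key}. Either $A$ is already Hurewicz, and you are done immediately since $A$ is not Borel, hence not $\sigma$-compact --- note this needs no unboundedness in $V$, and it is the case that must occur in the $\mathrm{MA}+\neg\mathrm{CH}$ model above, where every set of reals of size $<\mathfrak{b}$ is Hurewicz. Or else $A$ is not Hurewicz, and then by Hurewicz's characterization there is a continuous $f:A\to\omega^\omega$ with $\leq^*$-unbounded range; extending $f$ to a Borel $g:\omega^\omega\to\omega^\omega$ and letting $b$ code both $a$ and $g$, absoluteness of Borel evaluation gives $f[A]\subseteq\omega^\omega\cap L[b]$, so the reals of $L[b]$ \emph{are} unbounded in $V$. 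Applying Lemma \ref{key} to $b$ now produces a co-analytic $\omega_1$-scale cofinal in $\omega^\omega\cap L[b]$, hence a genuine $\mathfrak{b}$-scale in $V$, and \cite[Theorem 3.3]{Ts} finishes exactly as you intended. In short, the unboundedness you hoped to obtain by reflection is instead manufactured from the failure of the first alternative; without this dichotomy your approach cannot be completed.
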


 The reason is that then \ref{bil}b) (and hence  \ref{bil}c)   and  \ref{bil}d))  imply $ \omega_{1}^{L[a]}\textless \omega_1$ for all $a \in \mathbb{R}$ and so $ \omega_{1}^{L}\textless \omega_1$. But then   $\omega_1$ is inaccessible in $L$, so it's consistent there is an inaccessible. We shall rely on the following version of a standard fact (see \cite{Kan94}, p.~171).

\begin{lem}\label{key} Assume $\omega_{1}^{L[a]}=\omega_1$ for some $a \in \mathbb{R}$. Then $\omega^{ \omega}\cap L[a]$ ordered by the relation $\leq^*$
of eventual dominance  has a co-analytic  $\omega_1$-scale, i.e., a  cofinal subset $A$ which is well-ordered by $ \leq^*$ in order type $\omega_1.$
\end{lem}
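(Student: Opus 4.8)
The plan is to build the scale inside $L[a]$ using the canonical $\Sigma^1_2$ well-ordering of reals there and to extract co-analyticity from the fact that membership in the scale can be certified by a countable well-founded witness.

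\begin{iiproof}
First I would recall the structure theory of $L[a]$. Working inside $L[a]$, there is a canonical well-ordering $<_{L[a]}$ of $\omega^\omega \cap L[a]$ whose initial segments are definable from $a$; the standard analysis (see \cite{Kan94}) gives that the relation $x <_{L[a]} y$ is $\Sigma^1_2(a)$ uniformly, because ``$x$ appears before $y$ in the constructible order'' is witnessed by a real coding a countable, well-founded model of a sufficient fragment of $V=L[a]$ in which both $x$ and $y$ occur with $x$ preceding $y$.

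Next I would transfinitely recurse along $<_{L[a]}$ to select the scale. Enumerate $\omega^\omega \cap L[a]$ in order type $\omega_1^{L[a]} = \omega_1$ as $\langle x_\alpha : \alpha < \omega_1\rangle$, and by recursion on $\alpha$ choose $f_\alpha \in \omega^\omega \cap L[a]$ that is $\leq^*$-above every $f_\beta$ for $\beta < \alpha$ and also $\leq^*$-above $x_\alpha$ itself, taking $f_\alpha$ to be the $<_{L[a]}$-least such real. Such a real exists at every countable stage because a countable family in $\omega^\omega$ is always eventually dominated, and the domination witness lies in $L[a]$. This produces $A = \{f_\alpha : \alpha < \omega_1\}$, a set well-ordered by $\leq^*$ in order type $\omega_1$ that is cofinal in $(\omega^\omega \cap L[a], \leq^*)$, since each $x_\alpha$ is dominated by $f_\alpha$.

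The heart of the argument, and the step I expect to be the main obstacle, is verifying that $A$ is co-analytic rather than merely $\Sigma^1_2$. The defining condition ``$f \in A$'' says that $f$ is the $<_{L[a]}$-least real simultaneously dominating $f$'s designated predecessor $x_\alpha$ and all earlier chosen $f_\beta$, which a priori quantifies existentially over the countable models certifying the $<_{L[a]}$ ordering and universally over rivals, suggesting a $\Delta^1_2$ complexity. To push this down to $\Pi^1_1$, I would use the uniqueness of the minimal witness: I would show that $f \in A$ if and only if for every real $z$ coding a countable well-founded model $M$ of enough of $V = L[a]$ containing $f$, the model $M$ verifies that $f$ is its own selected scale-element at the stage determined within $M$. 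Because the recursion is absolute between such transitive models and the true $L[a]$ by Shoenfield absoluteness applied inside each model, the universal-over-$z$ formulation is equivalent to the correct one, and ``$z$ codes a well-founded model'' is $\Pi^1_1$ while the arithmetic content evaluated in $M$ is Borel-in-$z$. The universal real quantifier in front of a $\Pi^1_1$ matrix keeps us at $\Pi^1_1$, giving that $A$ is co-analytic. I would single out the well-foundedness clause and the absoluteness of the recursion as the delicate points, since the whole reduction to $\Pi^1_1$ rests on replacing an existential ``there is a correct witness'' by a universal ``every well-founded witness agrees,'' which is exactly the move that trades $\Sigma^1_2$ for $\Pi^1_1$ when the object being defined is canonical.
\end{iiproof}
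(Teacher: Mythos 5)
Your recursion inside $L[a]$ is fine (and, for what it is worth, the paper itself does not prove this lemma at all: it cites it as a standard fact from Kanamori, p.~171), but the step you yourself identify as the heart of the argument --- pushing the complexity down to $\Pi^1_1$ --- contains a genuine error. Your universal characterization has the logical form
$$\forall z\,\bigl[\,\mathrm{WF}(z)\wedge(\text{$z$ codes a suitable model } M_z \text{ containing } a,f)\rightarrow M_z\models\varphi(f)\,\bigr].$$
The clause ``$z$ codes a \emph{well-founded} model'' is indeed $\Pi^1_1$, but it sits in the \emph{antecedent} of the implication, i.e.\ it occurs negatively; the matrix is therefore $\Sigma^1_1\vee\mathrm{Borel}=\Sigma^1_1$, and $\forall z\,\Sigma^1_1$ is $\Pi^1_2$, not $\Pi^1_1$. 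The closure of $\Pi^1_1$ under universal real quantification, which you invoke, applies only when the matrix is $\Pi^1_1$, i.e.\ when well-foundedness occurs positively. There is a second defect in the same step: if $f\notin L[a]$, then by condensation there is \emph{no} well-founded model of $V=L[\dot a]$ containing $f$, so your biconditional is vacuously satisfied and the set you define contains every real outside $L[a]$. Repairing this forces you to conjoin ``there exists at least one such model,'' which is $\Sigma^1_2$; combining the existential and universal forms then gives only a $\Delta^1_2$ definition of the scale. This is exactly what canonical-witness-plus-condensation arguments always give, and it is strictly weaker than the $\Pi^1_1$ conclusion the lemma asserts.

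The missing idea in the classical proof is that the scale elements must carry their own well-foundedness certificates, so that well-foundedness never has to be asserted of a quantified object. One arranges, inside $L[a]$, that the $\alpha$-th scale element codes (in addition to the dominating function, interleaved so as not to destroy the $\leq^*$-structure) a real $w\in\mathrm{WO}$ together with an order-preserving map of the ordinals of the relevant level $L_\xi[a]$ into the well-ordering coded by $w$. Then ``the coded model is well-founded'' is replaced by the arithmetic statement ``the given map embeds the model's ordinals into $w$,'' and the only non-arithmetic clause remaining is the single \emph{positive} occurrence of ``$w\in\mathrm{WO}$,'' which is $\Pi^1_1$; the agreement conditions quantified over all rival codes are now Borel, so the whole definition stays $\Pi^1_1(a)$. (Alternatively one can invoke the Spector--Gandy-type fact that $\{x: x\in L_{\omega_1^{x\oplus a}}[a]\}$ is $\Pi^1_1(a)$.) Without some such device, trading ``there exists a correct witness'' for ``every well-founded witness agrees'' only moves you from $\Sigma^1_2$ to $\Pi^1_2$, never to $\Pi^1_1$.
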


\begin{proof}[Proof of Theorem \ref{mengerinacc}]
Let $A$ be the co-analytic set given by Lemma \ref{key}. 
We know that  $A$ is not $\sigma$-compact and in fact not Borel. 
This follows from the standard fact that a Borel well-founded relation on a Borel set of reals has countable rank (see {~\cite[p.~239]{Kech}}). 
If $A$ is Hurewicz, then the proof of Theorem \ref{mengerinacc} is finished. Otherwise, by a theorem of Hurewicz \cite{Hur25}, there is a continuous
mapping $f: A\rightarrow \omega^\omega$ whose range is unbounded in $(\omega^\omega, \leq^*).$ 
The map $f$ extends to a continuous map on a $G_\delta$-superset of $A$. So   there is a Borel map (also called a \emph{measurable map})
$g:\omega^\omega\rightarrow \omega^\omega$ such that $g\upharpoonright A=f$. See Theorem $12.2$ in  \cite{Kech}  for more details.  Let $b\in \omega^\omega$ code both $a$ and the map $g.$ Then $\omega^{ \omega}\cap\, L[b]$ is unbounded in   $(\omega^\omega, \leq^*)$.  Applying  Lemma \ref{key} again, we obtain a co-analytic $\omega_1$-scale $B$ in $(\omega^\omega\cap L[b],\leq^*).$ Since $\omega^\omega\cap\, L[b]$ is unbounded in   $(\omega^\omega, \leq^*)$, that co-analytic $\omega_1$-scale $B$  is then a $\mathfrak{b}$-scale in $\omega^\omega$, i.e. an unbounded set $\{b_\alpha:\alpha<\mathfrak{b}\}$ such that the enumeration is increasing with respect to $\leq^*$. By  \cite[Theorem 3.3]{Ts}, $B\cup\omega^{<\infty}$ is Hurewicz, so the proof  of Theorem \ref{mengerinacc} is finished and hence so is the proof of Theorem \ref{bil}.
\end{proof}

In \cite{TZ}, Tall and Zdomskyy show questions about whether Menger definable sets of reals are $\sigma$-compact are essentially equivalent to questions about whether \textit{completely Baire}  definable sets of reals are Polish, where a space is completely Baire if each closed subspace satisfies the Baire Category Theorem.  A result needed for their work (which was written after seeing an early preprint of this paper) is that: 

\begin{thm}\label{uc}
If it is consistent there is an inaccessible cardinal, it is consistent that every completely Baire projective set of reals is Polish.
\end{thm}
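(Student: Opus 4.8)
The plan is to establish the consistency upper bound by working in the Solovay-type model used throughout the paper, namely $L(\mathbb{R})$ as computed in the Levy collapse of an inaccessible to $\omega_1$. Since the theorem only asserts relative consistency, I would start by assuming the consistency of an inaccessible, force with the collapse, and pass to the resulting model where every projective set of reals lies in $L(\mathbb{R})$ and satisfies the strong regularity properties recorded in the Remarks above.

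The conceptual key is the duality that Tall and Zdomskyy exploit: being \emph{completely Baire} is the topological dual of being \emph{Menger/Hurewicz}. Concretely, for a separable metrizable space $X$, failing to be Polish is witnessed (via the classical Hurewicz-type dichotomy) by a closed copy of $\mathbb{Q}$ embedded in $X$, just as failing to be $\sigma$-compact is witnessed by a closed copy of $\mathbb{P}=\omega^\omega$. I would therefore look for an analogue of $\mathbf{HD(L(\mathbb{R}))}$ for the ``Polish versus closed-copy-of-$\mathbb{Q}$'' dichotomy --- that is, a dichotomy asserting that any set of reals in $L(\mathbb{R})$ which is not Polish contains a closed subspace homeomorphic to $\mathbb{Q}$, equivalently a nonempty relatively closed crowded subset with no isolated points in the appropriate sense. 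Such a dichotomy for all sets of reals holds in the Solovay model by exactly the arguments of Di Prisco--Todorcevic cited above (Section 4 and point \#6 on p.~249 of \cite{DiPT}), reinterpreted as ZFC statements about sets of reals that happen to be members of $L(\mathbb{R})$.

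Granting that dichotomy, the main argument is short. First I would recall the characterization (due to Hurewicz) that a separable metrizable space is completely Baire if and only if it contains no closed copy of $\mathbb{Q}$; this is the tool that converts the hypothesis ``completely Baire'' into ``no closed copy of $\mathbb{Q}$.'' Then, given a projective $X\subseteq\mathbb{R}$ that is completely Baire, I would apply the $L(\mathbb{R})$-dichotomy: since $X\in L(\mathbb{R})$ and $X$ has no closed copy of $\mathbb{Q}$, the dichotomy forces $X$ to be Polish. This completes the implication in the model, and hence the relative consistency.

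The main obstacle I anticipate is verifying that the Di Prisco--Todorcevic machinery genuinely yields the $\mathbb{Q}$-dichotomy for all sets of reals in the Solovay model, not merely the $\sigma$-compactness ($\mathbb{P}$) dichotomy stated as $\mathbf{HD(L(\mathbb{R}))}$. The two dichotomies are parallel --- both are instances of the general phenomenon that every set of reals in the Solovay model has the perfect-set property and its relatives uniformly --- but the precise combinatorial core (a fusion/derivative argument on trees, run on a suitable tree of conditions in the collapse) must be checked to produce a \emph{closed} copy of $\mathbb{Q}$ rather than merely a dense or $G_\delta$ copy, and to do so for a general, not merely analytic, set. Once that uniform regularity is extracted from \cite{DiPT}, the completely-Baire-implies-Polish conclusion follows formally, and the inaccessible is all that is needed. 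I expect no further large-cardinal strength beyond the inaccessible already in play, since the whole construction takes place inside $L(\mathbb{R})$ of the Levy collapse.
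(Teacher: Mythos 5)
Your overall strategy --- Solovay-model consistency of a dichotomy, plus the characterization of completely Baire as ``no closed copy of $\mathbb{Q}$'' --- matches the paper's, but your proposal has a genuine gap at precisely its central step: you never prove the ``not Polish $\Rightarrow$ contains a closed copy of $\mathbb{Q}$'' dichotomy for projective (or $L(\mathbb{R})$) sets. You first assert that it ``holds by exactly the arguments of Di Prisco--Todorcevic,'' and then concede in your last paragraph that verifying this --- in particular, that the fusion machinery produces a \emph{closed} copy of $\mathbb{Q}$ inside an arbitrary non-Polish set --- is an open obstacle. As written, the key lemma of your argument is neither proved nor reduced to anything already established.

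The missing observation is that no new extraction from \cite{DiPT} is needed: the dichotomy you want follows at once, by taking complements, from the statement $\mathbf{HD(\Gamma)}$ already on record in Section 2. The conclusion of $\mathbf{HD}$ has two halves, and the second half is exactly the $\mathbb{Q}$-half: for a non-$\sigma$-compact $A$ it yields a Cantor set $K$ with $K\setminus A$ countable dense in $K$ and homeomorphic to $\mathbb{Q}$. So let $B$ be projective and completely Baire, and suppose $B$ is not Polish, i.e., not $G_\delta$ in $\mathbb{R}$. Then $A=\mathbb{R}\setminus B$ is projective (the projective class, unlike the analytic class, is closed under complementation) and not $F_\sigma$; since $F_\sigma$ and $\sigma$-compact coincide for subsets of $\mathbb{R}$, $A$ is not $\sigma$-compact. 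Applying \textbf{HD(projective)} to $A$ gives a Cantor set $K$ such that $K\setminus A=K\cap B$ is a copy of $\mathbb{Q}$ which is closed in $B$ (as $K$ is compact), contradicting complete Baireness via the Hurewicz--van Mill lemma \cite{Hur28}, \cite{Mi} that you cite. This complementation argument is exactly what the paper's one-line proof invokes (the derivation of 21.21 from 21.18 in \cite{Kech}), and it is also the argument of Theorem \ref{thm:analyticGdelta}, where $\mathbf{HD(\Pi^1_1)}$ is applied to the co-analytic complement of an analytic set. With that observation your anticipated ``main obstacle'' dissolves entirely; without it, the proposal does not constitute a proof.
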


\begin{proof}
We have stated in Section 2  that the consistency of an inaccessible yields the consistency of \textbf{HD(projective)}. From that it is easy to prove the desired result---see the proof of 21.21 from 21.18 in \cite{Kech}.
\end{proof}

\section{ A co-analytic gap theorem and the Hurewicz Dichotomy}

We shall need the following which is a  variant of Theorem 3 of \cite{T2}  given in \cite{T4}. For a family of subsets of $\omega$ denoted by $B$,  a subset $\Sigma\subseteq [\omega]^{< \omega}$ is called a \textbf{B-tree}  \cite{T2} if 
 
 \noindent(i) $\emptyset\in \Sigma$,\,
 (ii) for every $\sigma\in \Sigma$, the set $\{ i\in \omega: \sigma\cup \{i\}\in\Sigma\}$ is infinite and included in an element of $B$.\\

\begin{thm}\label{CAG}
\textbf{A co-analytic gap theorem} $\mathbf{(CAG)}$. Suppose $ \omega_{1}^{L[x]}\textless \omega_1$ for all $x \subseteq\omega$. Let $A$ and $B$ be two orthogonal families of subsets of $\omega$ closed downwards such that $A$ is co-analytic and $B$ is analytic or co-analytic. Then either $A$ is countably generated in   $B^\perp$ or there is a $B$-tree all  of whose branches are in $A$.
\end{thm}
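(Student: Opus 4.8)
The plan is to obtain $\mathbf{(CAG)}$ from the corresponding \emph{analytic} gap dichotomy --- Theorem 3 of \cite{T2} and its variant in \cite{T4} --- by using the hypothesis $\omega_1^{L[x]}<\omega_1$ to replace ``$A$ analytic'' with ``$A$ co-analytic.'' That is, I take for granted the \textsf{ZFC} statement: for orthogonal, downward-closed families with $A$ analytic and $B$ analytic or co-analytic, either $A$ is countably generated in $B^\perp$ or there is a $B$-tree all of whose branches are in $A$. The entire task is the lift from analytic to co-analytic $A$, and the set-theoretic hypothesis is there precisely to make a co-analytic $A$ behave, for this dichotomy, like an analytic one.

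First I would fix a real $a$ coding codes for $A$ and $B$ and expand $A$ along a $\Pi^1_1(a)$-rank into its constituents, so that $A=\bigcup_{\xi<\omega_1}A_\xi$ is an increasing union of Borel sets. Replacing each $A_\xi$ by its downward closure inside $A$ --- a set that is analytic, contained in $A$, and hence still orthogonal to $B$ --- I may assume every $A_\xi$ is downward closed and $B$-orthogonal. Applying the analytic dichotomy to each pair $(A_\xi,B)$ then splits the argument: if some $A_\xi$ falls on the tree side, its $B$-tree has all branches in $A_\xi\subseteq A$ and the second alternative of $\mathbf{(CAG)}$ holds outright. The remaining, and real, case is that \emph{every} $A_\xi$ is countably generated in $B^\perp$, say by $\{c^\xi_n:n<\omega\}\subseteq B^\perp$; the goal becomes to extract from these $\aleph_1$-many families a \emph{single} countable subfamily of $B^\perp$ that is cofinal, under $\subseteq^*$, in all of $A$.

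This extraction is where the hypothesis is indispensable, since a union of $\aleph_1$ countably generated pieces need not be countably generated. The plan is to choose the generators canonically --- via a $\Delta^1_2(a)$ uniformization together with a boundedness argument --- and then route the construction through inner models: the failure of both alternatives is an assertion of controlled projective complexity, so it reflects into some $L[b]$, where $\omega_1^{L[b]}<\omega_1$ makes the ordinals indexing the relevant constituents genuinely countable. A single real $b$ then produces the generators that matter, and they form a countable subfamily cofinal in $A$, yielding the first alternative. I expect this reflection-plus-uniformization step to be the main obstacle: one must check that the two alternatives are simple enough for Shoenfield-type absoluteness to handle the low-complexity part while $\omega_1^{L[x]}<\omega_1$ absorbs the rest, and it is exactly here that the theorem genuinely needs its assumption, since the conclusion can fail when $\omega_1$ is not inaccessible to reals. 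The freedom for $B$ to be analytic or co-analytic costs nothing beyond the base case, as both the $B$-tree condition and $B^\perp$ refer to $B$ only through membership; the reflection argument is insensitive to which of the two $B$ is.
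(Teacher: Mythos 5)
Your reduction to the analytic gap theorem does not work as sketched, because the step you defer to the end --- extracting a \emph{single} countable generating family from the $\aleph_1$-many constituent-wise families --- is not a technical wrinkle but the entire content of the theorem, and the mechanism you propose (uniformization plus reflection) cannot deliver it. To see that Case 2 is where all the difficulty lives, note that your Case 1 can essentially never help: if a $B$-tree has all branches in the (analytic, downward-closed) closure of some constituent, it already witnesses the second alternative for $A$; conversely, whenever the second alternative fails for $A$, \emph{every} constituent lands on the countably generated side, in ZFC alone, by the very dichotomy you invoke. Now work in $L$, where $\omega_1^{L}=\omega_1$: there CAG must fail (otherwise Theorem~\ref{thm:OGA-wHD} would give $\mathbf{HD(\Pi^1_1)}$, hence $\sigma$-compactness of co-analytic Menger sets, contradicting Theorem~\ref{mengerinacc}). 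A failing pair $(A,B)$ in $L$ has no $B$-tree and $A$ not countably generated in $B^\perp$, yet by the previous sentence every constituent of $A$ \emph{is} countably generated in $B^\perp$. So ``all constituents countably generated $\Rightarrow$ $A$ countably generated'' is simply false in ZFC; your Case 2 is exactly the implication that needs $\omega_1^{L[x]}<\omega_1$, used globally, and your sketch does not supply the argument.

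The specific patch you propose also breaks down on inspection. When $B$ is co-analytic, $c\in B^\perp$ is $\Pi^1_2$, so ``$A$ is countably generated in $B^\perp$'' is $\Sigma^1_3$ and the failure of both alternatives is at the level of $\Pi^1_3$ --- beyond Shoenfield, so it need not reflect into any $L[b]$. Even when $B$ is analytic, so that the failure is $\Pi^1_2$ and does reflect, reflection buys nothing: inside $L[b]$ the hypothesis fails ($\omega_1^{L[b]}$ is $\omega_1$ there), so no contradiction arises in $L[b]$; and in the other direction, generators located inside $L[b]$ by Shoenfield (applied to constituents whose Borel codes lie in $L[b]$) generate, after transferring the relevant $\Pi^1_1$ statements up by Mostowski absoluteness, only the constituents $A_\xi$ with $\xi<\omega_1^{L[b]}$. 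The elements of $A$ of $\Pi^1_1$-rank at least $\omega_1^{L[b]}$ are invisible to $L[b]$ from the constituent point of view, and they are precisely the problem. The paper avoids your decomposition entirely: it fixes $\omega_1$-Suslin tree representations $A=p[T]$, $B=p[S]$ with $T,S$ downward-closed trees on $[\omega]^{<\omega}\otimes[\omega_1]^{<\omega}$ lying in $L[x]$, runs a Cantor--Bendixson-style derivative $\partial U=\{t\in U:\bigcup p[U(t)]\notin B^\perp\}$, checks by absoluteness that every derived tree $T^{(\alpha)}$ stays in $L[x]$, and, when the derivatives empty out, reads off generators $b(\alpha,t)=\bigcup p[T^{(\alpha)}(t)]\in B^\perp\cap L[x]$. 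The decisive point your approach cannot replicate is that the tree $T$, though an element of $L[x]$, projects \emph{in $V$} onto all of $A$, including the elements of arbitrarily high rank; so generators defined inside $L[x]$ cover all of $A$, and the hypothesis is used exactly once, at the very end, to conclude that $\mathcal{P}(\omega)\cap L[x]$, and hence the generating family, is countable.
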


\begin{proof}
For the convenience of the reader we sketch the argument from \cite{T4}. The proof is, in fact, a   straightforward  variation  of the proof of Theorem 3 of \cite{T2}. We start the proof by fixing a real $x$ and two downwards closed subtrees $T$ and $S$ of  $[\omega]^{<\omega}\otimes[\omega_1]^{<\omega}$ ordered by end-extension  and belonging to $L[x]$ for some $x\subseteq \omega$ such that $A=p[T]$ and $B=p[S]$. (See e.g. \cite[p.86]{M}.) For a subtree $U$ of $T$ and $t=(t_0, t_1)\in U,$ let $U(t)$ denote the subtree of $U$ consisting of all nodes of $U$ comparable to $t.$ For a downwards closed subtree $U$ of $T,$ let
$$\partial U=\{t\in U: \bigcup p[U(t)]\not\in B^\perp\}.$$
Note that by absoluteness, if $U$ belongs to $L[x]$ so does $\partial U.$

Let $T^{(0)}=T$, $T^{(\alpha+1)}=\partial T^{\alpha}$ and $T^{\lambda}=\bigcap_{\alpha<\lambda} T^{\alpha}$ for limit ordinal $\lambda.$ Let $\beta$ be the minimal ordinal $\alpha$ with the property that  $T^{(\alpha+1)}=T^{\alpha}.$ If $T^{\beta}\neq \emptyset$, then working as in the proof of Theorem 3 of \cite{T2}, we get a $B$-tree all of whose branches are in $A.$  If $T^{\beta}= \emptyset$, then  for every $a\in A$ there are $\alpha<\beta$ and $t\in T^{(\alpha)}\setminus T^{\alpha+1}$ such that $a\in p[T^{(\alpha)}(t)]$ and therefore
$a\subseteq b(\alpha, t)= \bigcup p[T^{(\alpha)}(t)].$ We have already noted that the trees of the form
$T^{(\alpha)}(t)$ belong to $L[x],$ so we have that the sets $b(\alpha, t)$ are also elements of $L[x].$
Since all these sets are in $B^\perp,$ we have that $A$ is generated by $B^\perp\cap L[x].$ Since by our assumption $ \omega_{1}^{L[x]}\textless \omega_1$,  this set is countable, so we conclude that $A$ is countably generated in $B^\perp.$
\end{proof}

\begin{rmk}
In \cite{T} it is shown that in the Solovay model the  conclusion of $4.1$   holds whenever $A$ and $B$ are definable from finite sets of reals and ordinals. Thus, in particular. the consistency of 1.3a) implies the consistency of 1.3d).

\end{rmk}

\begin{thm}\label{thm:OGA-wHD}
$\mathbf{CAG}$ implies $\mathbf{HD(\mathbf{\Pi^1_1})}$.
\end{thm}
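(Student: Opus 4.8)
\noindent The plan is to derive the Hurewicz scheme directly from the gap dichotomy $\mathbf{CAG}$ (Theorem \ref{CAG}), reading its two alternatives as ``$\sigma$-compact'' and ``Hurewicz scheme'' respectively. First I would reduce to a canonical ambient space. Since the conclusion of the Hurewicz Dichotomy is preserved under the usual reductions between Polish spaces, and since by the cited theorem of Hurewicz \cite{Hur25} a co-analytic set $A$ that is not $\sigma$-compact admits a continuous map to $\omega^\omega$ with $\le^*$-unbounded range (extendable to a Borel map on a $G_\delta$-superset, exactly as in the proof of Theorem \ref{mengerinacc}), I may assume $X=2^\omega$, identify $2^\omega$ with $\mathcal P(\omega)$, and work with the canonical pair $([\omega]^\infty,[\omega]^{<\omega})\cong(\mathbb P,\mathbb Q)$ as the target of the scheme. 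In these coordinates a Hurewicz scheme for $A$ amounts to an infinitely-splitting subtree $\Sigma\subseteq[\omega]^{<\omega}$ whose branch space $[\Sigma]\cong\mathbb P$ lands in $A$ and whose node set $\Sigma\cong\mathbb Q$ lands in the analytic complement $U=X\setminus A$; this is precisely the shape of object that the second alternative of $\mathbf{CAG}$ delivers.

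The heart of the argument is to manufacture two hereditary orthogonal families on $\omega$ to which $\mathbf{CAG}$ applies. I would let $\mathcal A$ be a downward-closed co-analytic family read off from $A$ (its members coding finite approximations to points of $A$ along the fixed coding), and let $\mathcal B$ be a downward-closed analytic family coding the ``compact/convergent'' obstruction coming from $U$, arranged so that $\mathcal A\perp\mathcal B$, i.e.\ $\mathcal A\subseteq\mathcal B^\perp$. The intended dictionary is: a set belongs to $\mathcal B^\perp$ exactly when the corresponding piece of $X$ is relatively compact and misses $U$ in the limit, so that ``$\mathcal A$ is countably generated in $\mathcal B^\perp$'' says that $A$ is covered by countably many compact sets, while a $\mathcal B$-tree with all branches in $\mathcal A$ is (after transport through the coding) an infinitely-splitting Cantor scheme of the kind described above. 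Here the hypothesis $\omega_1^{L[x]}<\omega_1$ for all $x\subseteq\omega$ needed to invoke $\mathbf{CAG}$ is supplied by $\mathbf{CAG}$ itself, so nothing extra is assumed.

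With the families in hand I would apply $\mathbf{CAG}$ and split into its two cases. In the first case $\mathcal A$ is countably generated in $\mathcal B^\perp$; unwinding the dictionary this exhibits a countable family of compact subsets of $X$ covering $A$, so $A$ is $\sigma$-compact --- which is either vacuous against the standing assumption or recorded as the $\sigma$-compact alternative of the dichotomy. In the second case there is a $\mathcal B$-tree $\Sigma$, all of whose branches lie in $\mathcal A$; since every node of $\Sigma$ splits infinitely, $[\Sigma]$ is homeomorphic to $\mathbb P$ and the set of nodes, taken in the closure inside $\mathcal P(\omega)$, is a countable dense copy of $\mathbb Q$. Transporting $\Sigma$ back through the coding yields a continuous injection of $2^\omega$ into $X$ whose image is the required Cantor set $K$: the branch-limits form the dense $G_\delta$ set $K\cap A\cong\mathbb P$, and the node-images form the countable dense set $K\setminus A=K\cap U\cong\mathbb Q$. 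Checking that $K$ has exactly these properties uses only the infinite splitting of $\Sigma$ together with the fact that branches were placed in $A$ and nodes in $U$.

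The main obstacle is the middle step: defining $\mathcal A$ and $\mathcal B$ so that they are simultaneously hereditary, correctly definable ($\mathcal A$ co-analytic and $\mathcal B$ analytic, as $\mathbf{CAG}$ requires), genuinely orthogonal, and --- most delicately --- so that the two alternatives of $\mathbf{CAG}$ translate back to $\sigma$-compactness and to an \emph{exact} Hurewicz scheme. The subtlety is that hereditariness of $\mathcal A$ only forces branches to lie \emph{below} coded points of $A$, whereas the scheme requires branch-limits to be genuine points of $A$ and the interpolated nodes to be genuine points of the complement $U$; reconciling downward-closedness with this exact membership, and carrying out the fusion that turns the abstract $\mathcal B$-tree into an injective continuous Cantor scheme in $X$ with the correct convergence, is where the real work lies. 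This is exactly the point handled in \cite{T2} and \cite{T4}, and I would model the construction on the derivative analysis ($\partial$) already used in the proof of Theorem \ref{CAG}.
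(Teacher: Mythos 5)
Your overall strategy is indeed the paper's: recode $A$ as a family of subsets of a countable set, feed a suitable orthogonal pair to Theorem \ref{CAG}, and read the first alternative as ``$A$ is $\sigma$-compact'' and the second as the Cantor-set conclusion of the dichotomy. But as a proof the proposal has a genuine gap, and you in effect name it yourself: everything hinges on actually exhibiting the pair $(\mathcal A,\mathcal B)$ and on converting the $\mathcal B$-tree into the required Cantor scheme, and you defer exactly those steps to \cite{T2}, \cite{T4} (``this is where the real work lies''). That deferred content \emph{is} the paper's proof: it takes $\hat A$ to be the family of infinite chains of the Cantor tree $2^{<\omega}$ whose union belongs to $A$ (co-analytic, since the union map is Borel), takes $B=\hat A^{\perp}$ --- note that this $B$ comes out co-analytic, not analytic-and-read-off-from-$U$ as you guess, which CAG tolerates but which you would still have to verify --- checks that countable generation of $\hat A$ in $B^{\perp}$ would force $A$ to be $\sigma$-compact, and then carries out the key construction: a recursion defining $\phi:2^{<\omega}\to 2^{<\omega}$ (with the ``largest initial segment ending in $1$'' bookkeeping) so that sequences with infinitely many $1$'s are sent to infinite chains of the $B$-tree $\Sigma$, hence into $\hat A$, while eventually-$0$ sequences are sent into a single splitting set $\Sigma(t)$, hence into $B$. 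Without some such explicit fusion argument there is no proof, only a plan.

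Two further concrete problems. First, your reduction step misquotes \cite{Hur25}: the theorem that a set admits a continuous map with $\le^*$-unbounded range in $\omega^\omega$ characterizes failure of the \emph{Hurewicz property}, not failure of $\sigma$-compactness (under CH there are Hurewicz non-$\sigma$-compact sets of reals, as the paper recalls); fortunately the passage to $A\subseteq 2^\omega$ is standard and needs no such theorem. Second, and more seriously, your final picture --- branches of $\Sigma$ yielding points of $A$ and nodes yielding points of $U=X\setminus A$, all inside $X$ --- is not what the second alternative of CAG delivers. Branches of the $B$-tree are chains converging to points of $A$, but the objects on the $\mathbb{Q}$-side are elements of $B$: subsets of $2^{<\omega}$ which are \emph{not} chains and hence do not correspond to, or converge to, points of $X$ at all. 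This is precisely why the paper's argument builds its Cantor set $P=\{\phi[c_x]:x\in 2^\omega\}$ inside the hyperspace $2^{2^{<\omega}}$, split by $\hat A$ rather than by $A$, and draws its conclusion at that level (via the analyticity of the complement of $\hat A$). Producing the copy $K\subseteq X$ split by $A$ itself, with $K\setminus A$ landing exactly in the complement, is the delicate point your dictionary assumes away, and nothing in the proposal supplies it.
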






%


\begin{proof}[Proof of Theorem \ref{thm:OGA-wHD}]
 Suppose $A$ is a co-analytic  not $\sigma$-compact set. We shall find a counterexample to the conclusion of the co-analytic gap theorem. First of all, we may assume $A$ is a subset of the Cantor set $2^\omega.$
 Let $\hat{A}$ be the collection of all infinite chains of the Cantor tree $2^{<\omega}$ whose union belongs to $A.$ 
 Note that $\hat{A}$ is a co-analytic collection of infinite subsets of $2^{<\omega}.$ Let $B=\hat{A}^\perp.$  We shall show that both alternatives of CAG fail for the gap $(\hat{A}, B).$  First of all note that $\hat{A}$ is not a countably generated ideal since otherwise $A$ would be a $\sigma$-compact set. Since $B^\perp=\hat{A}$ this shows that the first alternative of CAG fails for the gap $(\hat{A}, B).$  So, we are left with the alternative that there is a $B$-tree $\Sigma$ all of whose infinite branches are in $\hat{A}.$  Thus, $\Sigma$ is a collection of finite subsets of $2^{<\omega}$ such that $\emptyset\in \Sigma$ and if $t\in \Sigma,$ then $\Sigma(t)=\{\sigma\in 2^{<\omega}: t\cup\{\sigma\}\}$ is an infinite set belonging to $B.$ To view $\Sigma$ as a tree we order $2^{<\omega}$ in order type $\omega$ extending the partial ordering of end-extension in some natural way; we assume that for every $t\in \Sigma$, every $\sigma\in \Sigma(t)$ is above every $\tau\in t.$  This allows us to define a tree ordering on $\Sigma$ by letting $s\sqsubseteq t$ if and only if $s\subseteq t$ and every element of $s$ is smaller than every element of $t\setminus s$ in the $\omega$-ordering of $2^{<\omega}$ just fixed. Define a one-to-one mapping
 $\phi: 2^{<\omega}\rightarrow 2^{<\omega} $ as follows. The  definition is by  recursion on the $\omega$-ordering. Let $\phi(\emptyset)=\emptyset$. Suppose $\phi(\tau)$ is defined. Let $\tau[1]$ be the largest initial segment of $\tau$ with last digit $1$; if such initial segment does not exist, put $\tau[1]=\emptyset.$ Let $\phi(\tau^\frown 0)$ be the minimal available element of  $\Sigma(\phi(\tau))$ and let $\phi(\tau^\frown 1 )$ be the minimal available element of $\Sigma(\tau).$
 The following properties of  $\phi: 2^{<\omega}\rightarrow 2^{<\omega} $ are easy to verify.  If $x\in 2^\omega$ has infinitely many $1$'s, then the $\phi$-image of the infinite chain $c_x=\{ x\upharpoonright n: x(n)=1 \}$ is an infinite chain of $\Sigma$ and therefore an element of $\hat{A}.$ On the other hand, if $x\in 2^\omega$ is eventually $0$ then the $\phi$-image of the chain 
$c_x=\{ x\upharpoonright n: x(n)=1 \}\cup\{x\upharpoonright n: n>n_x\},$ where $n_x$ is the maximal integer where $x$ has a digit $1$, belongs to the family $B.$  Note that $x\mapsto \phi[c_x]$ is a continuous map from $2^\omega$ into the power-set of $2^{<\omega}$, viewed as $2^{2^{<\omega}}.$
 So if we let $P=\{\phi[c_x]: x\in 2^\omega\},$
 we get a copy of the Cantor set inside $2^{2^{<\omega}}$ such that 
 $P\setminus \hat{A}$ is a countable dense subset of $P$. It follows that the complement of $\hat{A}$ in   $2^{2^{<\omega}}$ includes a closed copy of the rationals, so  it can't be analytic, a contradiction. This finishes the proof. 
 \end{proof}

\medskip

We have the following consequence.

\begin{cor}\label{sonu}
The following are equivalent:
\begin{enumerate}[label=\alph*)]
	\item $ \omega_{1}^{L[a]}\textless \omega_1$ for all $a\in\mathbb{R}$,
	\item co-analytic Menger subsets of $\mathbb{R}$ are $\sigma$-compact,
	\item co-analytic Hurewicz subsets of $\mathbb{R}$ are $\sigma$-compact.
\end{enumerate}
\end{cor}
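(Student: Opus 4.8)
The plan is to prove the three-way equivalence in Corollary \ref{sonu} by assembling results already established in the excerpt, organized as a cycle of implications $a) \Rightarrow b) \Rightarrow c) \Rightarrow a)$, together with the trivial observation that $\sigma$-compactness gives the needed structure. First I would establish $a) \Rightarrow b)$. Assuming $\omega_1^{L[a]} < \omega_1$ for all $a \in \mathbb{R}$, the hypothesis of the co-analytic gap theorem $\mathbf{CAG}$ (Theorem \ref{CAG}) is satisfied, since its hypothesis is exactly $\omega_1^{L[x]} < \omega_1$ for all $x \subseteq \omega$. By Theorem \ref{thm:OGA-wHD}, $\mathbf{CAG}$ implies $\mathbf{HD}(\mathbf{\Pi^1_1})$, the Hurewicz Dichotomy for co-analytic sets. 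Finally, the theorem following the definition of $\mathbf{HD}(\Gamma)$ states that if $\Gamma$ satisfies $\mathbf{HD}(\Gamma)$ then every Menger member of $\Gamma$ is $\sigma$-compact; taking $\Gamma$ to be the co-analytic sets yields precisely $b)$.

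\medskip

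Next I would handle the easy implication $b) \Rightarrow c)$. Every Hurewicz space is Menger (the Hurewicz property demands a $\gamma$-cover, which is in particular a cover, so it is formally stronger). Hence a co-analytic Hurewicz set is in particular a co-analytic Menger set, and $b)$ tells us it is $\sigma$-compact; this gives $c)$ immediately with no further work.

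\medskip

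The remaining and most substantial direction is the contrapositive of $c) \Rightarrow a)$: if $\omega_1^{L[a]} = \omega_1$ for some $a \in \mathbb{R}$, then there is a co-analytic Hurewicz set of reals that is not $\sigma$-compact, contradicting $c)$. But this is exactly Theorem \ref{mengerinacc}, whose proof is already given in the excerpt: Lemma \ref{key} produces a co-analytic $\omega_1$-scale $A$, which is not $\sigma$-compact (indeed not Borel); if $A$ is itself Hurewicz we are done, and otherwise one passes through Hurewicz's unbounded-image theorem to extract a co-analytic $\mathfrak{b}$-scale $B$, and $B \cup \omega^{<\infty}$ is Hurewicz but not $\sigma$-compact by \cite[Theorem 3.3]{Ts}. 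Since the set produced is co-analytic, Hurewicz, and not $\sigma$-compact, its existence refutes $c)$, closing the cycle.

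\medskip

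The main obstacle is not in the logical assembly---the cycle closes cleanly once one notes that the hypotheses of $\mathbf{CAG}$ and of Theorem \ref{mengerinacc} are exactly the two sides of the dichotomy $\omega_1^{L[a]} < \omega_1$ versus $\omega_1^{L[a]} = \omega_1$. The only point requiring mild care is the direction $b) \Rightarrow c)$: one must confirm that the objects in $c)$ (Hurewicz sets) are genuinely a subclass of those in $b)$ (Menger sets), so that $b)$ applies to them verbatim; this is immediate from the definitions since a $\gamma$-cover is a cover. I expect the entire proof to reduce to a short citation-driven argument, with the real content having been front-loaded into Theorems \ref{CAG}, \ref{thm:OGA-wHD}, and \ref{mengerinacc}.
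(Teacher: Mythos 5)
Your proposal is correct and matches the paper's intended argument: the paper states Corollary \ref{sonu} as an immediate consequence, and the only way to assemble it is exactly your cycle --- a) $\Rightarrow$ b) via Theorem \ref{CAG}, Theorem \ref{thm:OGA-wHD}, and the theorem that $\mathbf{HD}(\Gamma)$ forces Menger members of $\Gamma$ to be $\sigma$-compact; b) $\Rightarrow$ c) since Hurewicz implies Menger; and c) $\Rightarrow$ a) by the contrapositive using Theorem \ref{mengerinacc}. No gaps; this is essentially the same approach as the paper.
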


 \begin{rmk} An early version of this paper (The Open Graph Axiom and Menger's Conjecture, Arxiv.org) claimed that the Open Graph Axiom (formerly known as the Open Coloring Axiom of \cite{T1}, but renamed to avoid confusion with the identically named axiom of \cite{ARS}) for co-analytic sets implied co-analytic (projective) Menger sets of reals are $\sigma$-compact. The proof was flawed, but the result is true because of Theorems \ref{CAG} and \ref{thm:OGA-wHD} and the following result.

\end{rmk}



 \begin{thm} [~ \cite{Fe}] The following are equivalent:
 \item 1. $\omega_{1}^ { L[a] }<\omega_1$  for all $a \in \mathbb{R}$, 
 \item 2. $\omega_1$ is inaccessible in $L[a]$, any $a \in \mathbb{R}$,
 \item 3. $\mathbf{OGA^*( \Pi^1_1)}$\, (the co-analytic axiom referred to above). 
 
 \end{thm}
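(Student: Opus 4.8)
The plan is to establish the three-way equivalence by proving a cycle of implications, $1 \Rightarrow 3 \Rightarrow 2 \Rightarrow 1$, drawing on the machinery already developed in the excerpt. The equivalence of $1$ and $2$ is the easier half and is genuinely a standard fact about $L[a]$: saying that $\omega_1$ is inaccessible in $L[a]$ means it is a regular limit cardinal there, and since $\omega_1$ is trivially regular, inaccessibility in $L[a]$ amounts precisely to $\omega_1^{L[a]} < \omega_1$ (otherwise $\omega_1$ would be a successor cardinal in $L[a]$, hence not a limit cardinal there). Quantifying over all reals $a$ on both sides preserves the equivalence verbatim, so $1 \Leftrightarrow 2$ requires only unwinding definitions. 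The substantive content of the theorem therefore lies in connecting these purely set-theoretic statements to the combinatorial/topological axiom $\mathbf{OGA^*(\Pi^1_1)}$.

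For the implication $1 \Rightarrow 3$, I would invoke the results already assembled in this paper rather than reproving them from scratch. The hypothesis $\omega_1^{L[a]} < \omega_1$ for all $a$ is exactly the hypothesis of the co-analytic gap theorem $\mathbf{CAG}$ (Theorem~\ref{CAG}), so $1$ gives $\mathbf{CAG}$. The open graph axiom for co-analytic sets is, in spirit, the assertion that for any open graph on a co-analytic set of reals either the set is covered by countably many independent pieces or there is a perfect clique; this is the natural dichotomy that $\mathbf{CAG}$ delivers when specialized to the graph-theoretic setting. I would show $\mathbf{CAG} \Rightarrow \mathbf{OGA^*(\Pi^1_1)}$ by taking an open coloring on a co-analytic set $A$, coding the ``$0$-colored'' edge relation as the orthogonality/tree data $(A', B)$ that $\mathbf{CAG}$ consumes, and then reading the two alternatives of $\mathbf{CAG}$ as exactly the two alternatives of the open graph axiom: countable generation in $B^\perp$ corresponds to a countable decomposition into $1$-homogeneous sets, and the existence of a $B$-tree all of whose branches lie in $A'$ yields a perfect $0$-homogeneous set. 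This is the translation step, and I expect it to be the main obstacle, since it requires setting up the coding so that the downward-closure and orthogonality hypotheses of $\mathbf{CAG}$ are met and so that a $B$-tree genuinely produces a \emph{perfect} homogeneous set rather than merely an uncountable one.

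For the remaining implication $3 \Rightarrow 1$ (equivalently $3 \Rightarrow 2$), I would argue by contraposition. Suppose $1$ fails, so $\omega_1^{L[a]} = \omega_1$ for some real $a$. Then by Lemma~\ref{key} there is a co-analytic $\omega_1$-scale in $(\omega^\omega \cap L[a], \leq^*)$, which is a co-analytic well-ordered unbounded set with no perfect subset (a scale, being well-ordered of length $\omega_1$ under $\leq^*$, cannot contain a perfect set, as a perfect set would be non-$\sigma$-compact and the scale's structure forbids an independent perfect piece). This scale, together with an appropriately chosen open coloring encoding the $\leq^*$-comparability relation, violates the open graph dichotomy: it is neither countably decomposable into homogeneous pieces nor does it contain a perfect homogeneous set, contradicting $\mathbf{OGA^*(\Pi^1_1)}$. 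In fact this is the same obstruction used in the proof of Theorem~\ref{mengerinacc}, so I would lean on that construction. Closing this cycle, together with $1 \Leftrightarrow 2$, gives the full equivalence, and I would remark that since the paper's Corollary~\ref{sonu} already ties $1$ to the $\sigma$-compactness statements, this theorem completes the circle of characterizations.
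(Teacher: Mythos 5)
The first thing to note is that the paper offers no proof of this statement at all: it is quoted from Feng \cite{Fe} and used as a black box (it is what lets the authors pass from $\mathbf{OGA^*(\Pi^1_1)}$ to clause~1 and thence, via Theorems \ref{CAG} and \ref{thm:OGA-wHD}, to the Menger statement). So your proposal must stand on its own as a proof of Feng's theorem, and as it stands it has genuine gaps.

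Two concrete problems. First, $1\Leftrightarrow 2$ is not ``unwinding definitions.'' Your parenthetical argument only gives $2\Rightarrow 1$ (if $\omega_1^{L[a]}=\omega_1$ then $\omega_1=(\omega^+)^{L[a]}$ is a successor cardinal in $L[a]$). The implication $1\Rightarrow 2$ fails pointwise: $\omega_1^{L[a]}<\omega_1$ for a single $a$ does not make $\omega_1$ inaccessible in $L[a]$, since $\omega_1$ could be $(\kappa^+)^{L[a]}$ for some $L[a]$-cardinal $\kappa$ with $\omega_1^{L[a]}\le\kappa<\omega_1$ (e.g.\ $a\in L$ and $V$ obtained by collapsing only $\omega_1^L$). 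The correct argument must invoke the hypothesis at a \emph{different} real: if $\omega_1=(\kappa^+)^{L[a]}$ with $\kappa<\omega_1$, let $b$ code a well-ordering of $\omega$ in type $\kappa$; every $\alpha<\omega_1$ has $L[a]$-cardinality at most $\kappa$ and $\kappa$ is countable in $L[a\oplus b]$, so $\omega_1^{L[a\oplus b]}=\omega_1$, contradicting~1. Second, and more seriously, the whole content of your $1\Rightarrow 3$ is the unproved claim $\mathbf{CAG}\Rightarrow\mathbf{OGA^*(\Pi^1_1)}$, which you offer only as a hoped-for ``translation'' and yourself flag as the main obstacle. No coding is actually given, and none is evident: $\mathbf{CAG}$ is a dichotomy about orthogonal, downward-closed families of subsets of $\omega$, while $\mathbf{OGA^*}$ is about open graphs on sets of reals, and the alternatives do not line up (countable generation in $B^\perp$ is not a countable decomposition into $1$-homogeneous sets, nor is a $B$-tree a perfect clique). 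Feng's proof of $2\Rightarrow 3$ does not factor through a gap theorem; it runs a derivative-plus-fusion analysis directly on the tree ($\omega_1$-Souslin, in $L[a]$) representation of the co-analytic set --- structurally parallel to the proof of Theorem \ref{CAG}, but not a consequence of its statement. The paper's own Remark about $L(\mathbb{R})[U]$ (where $\mathbf{OGA^*}$ holds for all sets yet $\mathbf{HD}$ fails) is precisely a warning that such dichotomies do not mechanically convert into one another. Finally, in $3\Rightarrow 1$ your coloring cannot encode ``$\leq^*$-comparability'': all pairs from the scale are $\leq^*$-comparable, so that coloring is trivial. The standard route (Todorcevic's proof that $\mathbf{OGA}$ implies $\mathfrak{b}>\omega_1$) takes $K_0$ to be incomparability under everywhere-domination (which \emph{is} open), shows the scale is not a countable union of $K_1$-homogeneous sets because well-ordered everywhere-domination chains are countable, and shows it has no perfect $K_0$-homogeneous subset via a Galvin-type argument that no perfect set can be $\leq^*$-well-ordered; your appeal to non-$\sigma$-compactness does none of this work.
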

 
  \cite{TZ} was in press as we were revising  this paper; the reference to    $\mathbf{OGA^*}$  there is not incorrect but is irrelevant.

\begin{rmk}
 There is already a rich body results about the inner model $L(\mathbb{R})$ especially when it is a Solovay model (i.e., computed in the forcing extensions of the Levy collapse of an inaccessible cardinal to $\omega_1$) and therefore fails to satisfy
 the Axiom of Choice but rather satisfies strong descriptive set-theoretic regularity properties for all sets of reals.  In \cite{DiPT} , Di Prisco and Todorcevic compare the Solovay model  $L(\mathbb{R})$ and its (forcing) extension $L(\mathbb{R})[U]$, which is obtained from that $L(\mathbb{R})$ by adjoining a selective ultrafilter $U$. They note that $\mathbf{OGA^*(\mathbf{\mathcal{P}(\mathbb{R}}))}$ holds in the latter model, but that $\mathbf{HD(\mathcal{P}(\mathbb{R}))}$ does not. Thus one has to be careful about asserting implications from various forms of $\mathbf{OGA^*}$ to corresponding forms of $\mathbf{HD}$. The counterexample  to   $\mathbf{HD}$ is defined from the generic ultrafilter $U$ and, therefore, the model $L(\mathbb{R})[U]$  satisfies  $\mathbf{OGA^*(L(\mathbb{R})[U])}$ but fails to satisfy $\mathbf{HD(L(\mathbb{R})[U])}$.

\end{rmk}

Here is another version of Theorem \ref{uc} :


\begin{thm}\label{thm:analyticGdelta}
If $\omega_1^{L[a]} < \omega_1$ for all $a \in \mathbb{R}$, then every analytic completely Baire subset of $\mathbb{R}$ is a $G_\delta$.
\end{thm}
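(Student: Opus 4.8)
The plan is to prove the contrapositive: assuming $\omega_1^{L[a]}<\omega_1$ for all $a\in\mathbb{R}$, I will show that an analytic set $A$ which is \emph{not} $G_\delta$ fails to be completely Baire. The guiding idea is that, in contrast with the Menger results of the previous sections, the dichotomy here should be applied not to $A$ itself but to its complement, which is co-analytic; this is exactly where the hypothesis does its work, through $\mathbf{HD(\Pi^1_1)}$.

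First I would reduce to a compact ambient space. Since being $G_\delta$ and being completely Baire are preserved under the relevant embeddings, I may realize $A$ as an analytic subset of a compact metrizable space $X$: for instance, embed $\mathbb{R}$ as an open (hence $G_\delta$) subset of $[0,1]$ or of the circle, so that $A$ is $G_\delta$ in $\mathbb{R}$ if and only if it is $G_\delta$ in $X$, while remaining analytic in $X$ and keeping its intrinsic complete-Baireness. The reason for insisting that $X$ be compact is the elementary equivalence valid there: a subset of $X$ is $F_\sigma$ if and only if it is $\sigma$-compact, since closed subsets of $X$ are compact. Consequently, if $A$ is not $G_\delta$, then its complement $C=X\setminus A$, which is co-analytic, is not $F_\sigma$ and therefore not $\sigma$-compact.

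Next I would invoke the Hurewicz Dichotomy for co-analytic sets. Under the standing hypothesis, Theorem~\ref{CAG} yields $\mathbf{CAG}$, and Theorem~\ref{thm:OGA-wHD} gives $\mathbf{HD(\Pi^1_1)}$. Applying $\mathbf{HD(\Pi^1_1)}$ to the non-$\sigma$-compact co-analytic set $C$ produces a Cantor set $K\subseteq X$ with $K\cap C$ homeomorphic to $\mathbb{P}$ and $K\setminus C$ a countable dense subset of $K$ homeomorphic to $\mathbb{Q}$. Since $C=X\setminus A$, we have $K\setminus C=K\cap A$, so $K\cap A$ is a copy of $\mathbb{Q}$. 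As $K$ is compact it is closed in $X$, whence $K\cap A$ is closed in the subspace $A$. Now $\mathbb{Q}$ is not Baire, so $K\cap A$ is a closed subspace of $A$ that fails the Baire Category Theorem; hence $A$ is not completely Baire, contradicting the hypothesis and completing the argument. This is the Baire-category analogue of the derivation, given just after the definition of $\mathbf{HD(\Gamma)}$, that ``$\mathbf{HD(\Gamma)}$ implies Menger members of $\Gamma$ are $\sigma$-compact,'' and of the passage from $21.18$ to $21.21$ in \cite{Kech}.

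The crux I expect is the set-up rather than any hard estimate: recognizing that the dichotomy must be applied to the co-analytic complement $C$, and that passing to a compact ambient space converts ``$A$ is not $G_\delta$'' into ``$C$ is not $\sigma$-compact.'' Once this translation is in place, the single appeal to $\mathbf{HD(\Pi^1_1)}$---the only ingredient that consumes the assumption $\omega_1^{L[a]}<\omega_1$---finishes everything, exactly as in the Menger case. The one point to verify with care is that the copy of $\mathbb{Q}$ delivered by $\mathbf{HD(\Pi^1_1)}$ is the piece $K\setminus C=K\cap A$ lying \emph{inside} $A$ (not the piece inside $C$), since it is this that provides a relatively closed, non-Baire subspace of $A$.
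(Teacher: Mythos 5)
Your proposal is correct and follows essentially the same route as the paper's proof: apply $\mathbf{HD(\Pi^1_1)}$ (obtained from the hypothesis via Theorems \ref{CAG} and \ref{thm:OGA-wHD}) to the co-analytic, non-$\sigma$-compact complement of $A$, and use the resulting closed copy of $\mathbb{Q}$ inside $A$ to contradict complete Baireness. Your detour through a compact ambient space is harmless but unnecessary, since $\mathbb{R}$ is itself $\sigma$-compact, so ``$F_\sigma$'' and ``$\sigma$-compact'' already coincide for subsets of $\mathbb{R}$, which is exactly what the paper uses implicitly.
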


Theorem \ref{thm:analyticGdelta} is used in \cite{TZ}, replacing    ``a $G_\delta$''   by  ``Polish'', which is obviously equivalent.

\begin{lem}[{ \cite{Hur28},\cite{Mi}}] $B \subseteq \mathbb{R}$ is completely Baire if and only if $B$ does not include a closed copy of $\mathbb{Q}$.
\end{lem}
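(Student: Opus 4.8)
The plan is to prove both directions, the forward one being routine and the reverse one carrying all the content. For the forward direction I would argue the contrapositive of the nontrivial claim: if $B$ contains a closed copy $Q$ of $\mathbb{Q}$, then $Q$ is a closed subspace of $B$ which is not a Baire space, since $\mathbb{Q}$ has no isolated points and is therefore the union of its countably many nowhere-dense singletons, hence meager in itself. Thus such a $B$ fails to be completely Baire, which is exactly the statement that a completely Baire $B$ omits closed copies of $\mathbb{Q}$.

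For the reverse direction I would prove the contrapositive: if $B$ is not completely Baire, then $B$ contains a closed copy of $\mathbb{Q}$. By definition some closed $C\subseteq B$ is not a Baire space, and since a copy of $\mathbb{Q}$ that is closed in $C$ is automatically closed in $B$, it suffices to produce one inside $C$. As $C$ is separable metric and not Baire, there is a nonempty relatively open $U\subseteq C$ together with an increasing sequence of closed nowhere-dense sets $F_n\subseteq C$ with $U\subseteq\bigcup_n F_n$; after shrinking $U$ to lie inside a relatively closed ball of $C$, closedness in $U$ will upgrade to closedness in $C$. By Sierpi\'nski's theorem (a countable metrizable space with no isolated points is homeomorphic to $\mathbb{Q}$) it then suffices to build inside $U$ a countable crowded set that is closed in $C$.

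The construction itself is where the failure of the Baire property is exploited. I would pass to the completion $\hat{U}$ of $U$: the sets $\overline{F_n\cap U}^{\hat{U}}$ are closed nowhere dense in $\hat{U}$ and cover the dense set $U$, so $U$ is meager in the complete space $\hat{U}$, whence by the Baire Category Theorem $\hat{U}\setminus U$ is comeager and dense in $\hat{U}$. I then build a convergent-sequence tree $\{q_s:s\in\omega^{<\omega}\}\subseteq U$ via a Cantor scheme of closed balls in $\hat{U}$ with diameters vanishing along branches, arranging that for each $s$ the children $q_{s^\frown n}$ converge to $q_s$ (so no point is isolated), while the single point determined by each infinite branch is steered into $\hat{U}\setminus U$, which is possible precisely because that set is dense and each successive ball can be re-centered to capture a point of it. Analyzing convergent sequences in the scheme shows every limit point of $Q=\{q_s\}$ is either some $q_s$ (a limit of children) or a branch point lying outside $U$; hence $Q$ is closed in $U$, therefore in $C$ and in $B$, and by Sierpi\'nski $Q\cong\mathbb{Q}$.

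The main obstacle is exactly this fusion: I must simultaneously keep all tree points $q_s$ in $U$ (using density of $U$ in $\hat{U}$), force the children of each node to converge to it, and drive every branch intersection into $\hat{U}\setminus U$ (using density of the comeager complement), and then verify that the \emph{only} new limit points created are the branch points and that these indeed avoid $U$. Transporting the mere failure of Baireness in $C$ into genuine meagerness of $U$ in its complete hull $\hat{U}$ is the conceptual key that makes the closedness of $Q$ attainable; the rest is a standard, if delicate, shrinking-ball bookkeeping.
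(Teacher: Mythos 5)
You are not being compared against a proof in the paper: the lemma is stated with citations to Hurewicz \cite{Hur28} and van Mill \cite{Mi} and no argument is given, so the benchmark is the classical proof from those sources, and structurally your proposal reproduces it. The easy direction (a closed copy of $\mathbb{Q}$ is a closed non-Baire subspace, since $\mathbb{Q}$ is crowded and hence meager in itself) is right; and in the hard direction your reductions are all sound: an open $U\subseteq C$ meager in $C$ is meager in itself; the closures $\overline{F_n\cap U}$ taken in the completion $\hat{U}$ are nowhere dense there because $U$ is dense in $\hat{U}$; shrinking $U$ into a relatively closed ball of $C$ correctly upgrades closed-in-$U$ to closed-in-$C$; and Sierpi\'nski's characterization finishes once you have a countable crowded set closed in $C$.

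One mechanism, as you literally describe it, would fail: you propose to drive each branch limit into $\hat{U}\setminus U$ by re-centering successive balls ``to capture a point of'' that dense set. Containing a point of a dense set in every ball of a nested sequence imposes no constraint whatsoever on the intersection point, and $\hat{U}\setminus U$ is not closed, so density alone cannot steer the limit. What comeagerness actually buys you is the sequence of \emph{open dense} sets $G_n=\hat{U}\setminus\overline{F_n\cap U}$, and the correct move is to force the entire ball $B_s$ at level $n=|s|$ to lie inside $G_n$, choosing its center $q_s$ from the dense set $U\cap G_n$ (children of $q_s$ can still be found converging to it, since $U\cap G_{n+1}$ is dense); then every branch point lies in $\bigcap_n G_n$, which is disjoint from $U=\bigcup_n (F_n\cap U)$. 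With that repair --- together with the sibling-disjointness and diameter bookkeeping you already flag, which ensures the only cluster points of $Q$ are the $q_s$ themselves and the branch points --- your outline becomes the correct classical argument of \cite{Mi}.
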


 \begin{proof}[ Proof of Theorem \ref{thm:analyticGdelta}] 
Suppose $B$ is completely Baire, analytic, but not a $G_\delta$. Then by $\mathbf{HD(\mathbf{\Pi^1_1})}$  , since $\mathbb{R}\setminus B$ is co-analytic and not $\sigma$-compact, there is a compact $K$ with $K\cap B$ homeomorphic to $\mathbb{Q}$. But $K\cap B$ is closed in $B$, contradicting $B$ being completely Baire. \end{proof}

We can now add an additional clause to Corollary \ref{sonu}:\\

e) every analytic completely Baire subset of $\mathbb R$ is a $G_\delta$.\\

The reason is that in \cite{TZ} it is established that if analytic completely  Baire subsets of $\mathbb R$ are Polish, then  co-analytic Menger subsets of  $\mathbb R$ are $\sigma$-compact.

\begin{prob} Is there a model in which every co-analytic Menger set of reals is $\sigma$-compact, but there is a projective Menger set of reals which is not $\sigma$-compact?
\end{prob}

\begin{prob}
	If there is a  co-analytic Menger subset of $\mathbb{R}$ which is not $\sigma$-compact, is there one which is not Hurewicz?
\end{prob}


  In conclusion, we thank the referee for pointing out several inaccuracies in the previous version of this note.

{\rm Franklin D. Tall, Department of Mathematics, University of Toronto, \\Toronto, Ontario M5S 2E4, CANADA}\\
{\it e-mail address:} {\rm tall@math.utoronto.ca}\\

{\rm Stevo Todorcevic, Department of Mathematics, University of Toronto, \\Toronto, Ontario M5S 2E4, CANADA}\\
{\it e-mail address:} {\rm stevo@math.utoronto.ca}\\

{\rm Se{\c{c}}il Tokg\"oz, Department of Mathematics, Hacettepe University,\\ Beytepe, 06800, Ankara, TURKEY}\\
{\it e-mail address:} {\rm secilc@gmail.com}

\end{document}